\titleformat{\section}[runin]{\bfseries\filcenter}{\thesection}{1em}{}
\renewcommand{\thesection}{\arabic{section}}
\title{\large \bf On the conjecture of non-inner automorphisms of  finite $p$-groups with a non-trivial abelian direct factor} 
\author{\small \bf Mandeep Singh$^\ast$ and Mahak Sharma$^{\ast\ast}$\\
	\small \em $^\ast$Department of Mathematics, Arya College, Ludhiana - 141 001,
	India\\
	\small \em $^{\ast\ast}$Department of Mathematics, Goswami Ganesh Dutta Sanatan Dharma College, Chandigarh - 160019,
	India\\
}
\date{}
\newtheorem{thm}{Theorem}[section]
\newtheorem{lm}[thm]{Lemma}
\newtheorem{cor}[thm]{Corollary}
\begin{document}
	\maketitle
	
	\begin{abstract}
		\noindent Let $p$ be a prime number. A longstanding conjecture asserts that every finite non-abelian $p$-group has a non-inner automorphism of order $p$. In this paper, we prove that the conjecture is true when a finite non-abelian $p$-group $G$ has a non-trivial abelian direct factor. Moreover, we prove that the non-inner automorphism is central and fixes $\Phi(G)$ elementwise. As a consequence, we prove that every group which is not purely non-abelian has a non-inner central automorphism of order $p$ which fixes $\Phi(G)$ elementwise.
	\end{abstract}

	\vspace{2ex}
	\noindent {\bf 2010 Mathematics Subject Classification:}
	Primary: 20D15, Secondary: 20D45.
	
	\vspace{2ex}

	\noindent {\bf Keywords:} Finite $p$-groups, Central automorphisms, Non-inner automorphisms.

	\section{Introduction}
	
	Let $G$ be a finite non-abelian $p$-group, where $p$ is a prime number. Let $G'$, $Z(G)$,  $\Phi(G)$ and $d(G)$ respectively denote the derived subgroup, the center, the Frattini subgroup and the minimal number of generators of $G$.  Let $\Omega_1(G)$ be the subgroup of $G$ generated by all elements of order $p$ and let $G^p$ denote the group generated by the set $\lbrace g^p : g \in G \rbrace$. For $p=2$, $\Phi(G)=G^{2}$ and for $p>2$, $\Phi(G)=G^{\prime}G^p$. In 1973, Berkovich \cite{ber2008} proposed the following conjecture (see for eg. \cite[Problem 4.13]{mazkhu}).
	
	\begin{center}
		{\bf Prove that every finite non-abelian $p$-group admits an automorphism of order $p$ which is not an inner one.}
	\end{center}
	
	This conjecture has been settled for the following classes of $p$-groups:
	$G$ is regular \cite{deasil2002, sch1980};
	$G/Z(G)$ is powerful \cite{abd2010}; $C_G(Z(\Phi(G)))\neq \Phi(G)$ \cite{deasil2002}; the nilpotency class of $G$ is $2$ or $3$ \cite{abd2007, lie1965}; $G$ is a $p$-group, where $p >2$ such that $(G, Z(G))$ is a Camina pair \cite{gho2013}, $Z(G)$ is non-cyclic and $W(G)$ is non-abelian, where $W(G)/Z(G)=\Omega_1(Z_2(G)/Z(G))$ \cite{garsin2021}.
	Most recently, in 2024, Komma \cite{kom2024} proved that if $G$ is of coclass $4$ and $5$, where $p \geq 5$ or $G$ is an odd order $p$-group with cyclic center satisfying $C_G(G^p \gamma_3(G)) \cap Z_3(G)\leq Z(\Phi(G))$, then the conjecture holds. For more detail on this conjecture, the reader are advised to see (\cite{abdgho2014}, \cite{att2013}, \cite{gho2014}, \cite{kom2024}, \cite{ruslegyad2016},\cite{msingh2025}) and references therein.
	
	In most of the cases, it is observed that if the conjecture does not hold for $G$, then the center $Z(G)$ is cyclic. The following natural question was asked in \cite{garsin2021}.\\

	Given a finite non-abelian $p$-group $G$ with non-cyclic center $Z(G)$, under what conditions does the conjecture hold?\\
	
	If we consider $Z(G)$ to be non-cyclic, then rank of $Z(G)$ must be atleast 2. While working on this objective, we prove that if $H$ and $K$ are normal subgroups of $G$ such that $G = H \times K$, where $H$ is non-trivial abelian and $K$ is non-abelian, then $G$ has a  non-inner central automorphism of order $p$ which fixes $\Phi(G)$ elementwise and as a consequence, we confirm the validity of the non-inner conjecture for another class of finite non-abelian $p$-groups i.e. all those groups which are not purely non-abelian.

	\section{Main results}
	An automorphism $\alpha$ of $G$ is called central if $g^{-1} \alpha(g) \in Z(G)$ for all $g \in G$. Let $H$ and $K$ be normal subgroups of a group $G$, then $G$ is called the interal direct product of $H$ and $K$ denoted by $G = H \times K$ if $G=HK$ and $H \cap K=1$.\\
	
	The following Lemma is well known.

	\begin{lm}
		Let $G$ be a group and let $x, y, z \in G$, then 
		\begin{enumerate}
			\item[$(i)$] $[x,yz]=[x,z][x,y][x,y,z]$.
			\item[$(ii)$] $[xy,z]=[x,z][x,z,y][y,z]$.
		\end{enumerate}
	\end{lm}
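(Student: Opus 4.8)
The plan is to prove both identities by direct expansion, using the standard conventions $[a,b] = a^{-1}b^{-1}ab$ and $a^b = b^{-1}ab$, together with the elementary observation that $a^b = a[a,b]$ for all $a,b \in G$. Once these conventions are fixed, both statements reduce to purely formal word manipulations in the free group on $x,y,z$, and each follows by a short cancellation.

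For part $(ii)$, I would first rewrite the left-hand side straight from the definition: $[xy,z] = (xy)^{-1} z^{-1} (xy) z = y^{-1} x^{-1} z^{-1} x y z$. On the right I would collapse the first two factors using $[x,z,y] = [x,z]^{-1}[x,z]^y$, so that $[x,z][x,z,y][y,z] = [x,z]^y [y,z]$. Expanding $[x,z]^y = y^{-1}(x^{-1}z^{-1}xz)y$ and $[y,z] = y^{-1}z^{-1}yz$ and cancelling the internal $zy\cdot y^{-1}z^{-1}$ pair then yields $y^{-1}x^{-1}z^{-1}xyz$, which matches the left-hand side.

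For part $(i)$ the same strategy applies: $[x,yz] = x^{-1}(yz)^{-1}x(yz) = x^{-1}z^{-1}y^{-1}xyz$, while on the right I would collapse $[x,y][x,y,z]$ into $[x,y]^z$ via $[x,y,z] = [x,y]^{-1}[x,y]^z$, so that $[x,z][x,y][x,y,z] = [x,z][x,y]^z = (x^{-1}z^{-1}xz)(z^{-1}x^{-1}y^{-1}xyz)$; cancelling the internal $xz\cdot z^{-1}x^{-1}$ pair gives $x^{-1}z^{-1}y^{-1}xyz$, as required. Alternatively one can derive $(i)$ from $(ii)$ via $[a,b]^{-1}=[b,a]$ and a relabelling, but the direct computation is no longer.

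I expect essentially no obstacle here beyond bookkeeping: the only point needing care is to state the commutator and conjugation conventions explicitly at the outset, since the identities as written are sensitive to whether one takes $[x,y]=x^{-1}y^{-1}xy$ or its inverse, and to the left-normed convention $[x,y,z]=[[x,y],z]$. With that pinned down, both parts are a few lines of cancellation, so I would present the computation compactly rather than belabouring it.
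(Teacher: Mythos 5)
Your proof is correct: with the conventions $[a,b]=a^{-1}b^{-1}ab$ and $[x,y,z]=[[x,y],z]$, both cancellations check out, and the identities are exactly the standard expansions $[x,yz]=[x,z][x,y]^z$ and $[xy,z]=[x,z]^y[y,z]$. The paper itself gives no proof (it cites the lemma as well known), so your direct verification is entirely in line with what is intended.
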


	\begin{thm}
		Let $G$ be a finite non-abelian $p$-group 
		and let $H$ and $K$ be normal subgroups of $G$ such that $G = H \times K$, where $H$ is non-trivial abelian and $K$ is non-abelian. Then $G$ has a  non-inner central automorphism of order $p$ which fixes $\Phi(G)$ elementwise. 
	\end{thm}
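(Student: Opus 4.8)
The plan is to build the automorphism explicitly from the direct product structure rather than through any abstract existence argument. Since $G = H \times K$ we have $Z(G) = Z(H) \times Z(K) = H \times Z(K)$ and $\Phi(G) = \Phi(H) \times \Phi(K)$. Because $K$ is a non-trivial finite $p$-group its centre is non-trivial, so I may fix $z \in Z(K)$ with $|z| = p$; because $H$ is a non-trivial abelian $p$-group, $\mathrm{Hom}(H,\langle z\rangle)\neq 0$, so I may fix a surjective homomorphism $\lambda\colon H \to \langle z\rangle$. I would then define $\alpha\colon G \to G$ by $\alpha(h,k) = (h,\,k\,\lambda(h))$; equivalently $\alpha = \mathrm{id}_G\cdot f$, where $f\colon G \to Z(G)$ is the homomorphism sending $(h,k)$ to $\lambda(h)\in\langle z\rangle\le Z(K)\le Z(G)$.

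The verification then proceeds in a fixed order. First, $\alpha$ is an endomorphism: this uses that $\lambda(h)$ is central in $K$ (so it commutes past the $K$-coordinate) together with $\lambda$ being a homomorphism. Second, $\alpha$ is injective, hence bijective, since $\alpha(h,k)=(1,1)$ forces $h=1$ and then $k=1$. Third, $\alpha$ is a central automorphism, because $(h,k)^{-1}\alpha(h,k) = (1,\lambda(h)) \in H\times Z(K) = Z(G)$. Fourth, $\alpha$ has order exactly $p$: an easy induction gives $\alpha^{n}(h,k) = (h,\,k\,\lambda(h)^{n})$, so $\alpha^{p} = \mathrm{id}$ since $\lambda(h)^{p}=1$, while $\alpha \neq \mathrm{id}$ because $\lambda\neq 0$. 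Fifth, $\alpha$ fixes $\Phi(G) = H^{p}\times\Phi(K)$ elementwise, since $\lambda(H^{p}) = \lambda(H)^{p} = \langle z\rangle^{p} = 1$.

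It remains to see that $\alpha$ is not inner, and this is the one place the direct factor $H$ is used. For any $g = (h_{0},k_{0})\in G$ one has $g(h,1)g^{-1} = (h_{0}hh_{0}^{-1},\,k_{0}k_{0}^{-1}) = (h,1)$ because $H$ is abelian, so every inner automorphism of $G$ restricts to the identity map on the subgroup $H\times 1$. Choosing $h_{\ast}\in H$ with $\lambda(h_{\ast})\neq 1$, we get $\alpha(h_{\ast},1) = (h_{\ast},\lambda(h_{\ast})) \neq (h_{\ast},1)$; hence $\alpha$ does not fix $H\times 1$ pointwise and therefore cannot be inner.

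I do not expect a serious obstacle here; the construction is essentially forced once one sees it. The single design choice that matters is to draw the order-$p$ element $z$ from $Z(K)$ rather than from $\Omega_1(H)$: this makes $\mathrm{Im}(f)$ lie inside $K\subseteq\ker f$ automatically, so the identity $\alpha^{n}(h,k)=(h,k\lambda(h)^{n})$ — and hence "order exactly $p$" — holds with no case analysis, whereas twisting by a homomorphism valued in $\Omega_1(H)$ would force a separate treatment of the degenerate case $H\cong\mathbb{Z}/p\mathbb{Z}$, where $\Omega_1(H)$ is too small to be simultaneously the image and contained in the kernel. The only remaining care needed is routine bookkeeping of the two coordinates in the homomorphism and order computations.
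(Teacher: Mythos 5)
Your proposal is correct and follows essentially the same route as the paper: the paper's map $mh^ik \mapsto mh^ikg^i$ (with $M$ a maximal subgroup of $H$ and $g \in \Omega_1(Z(K))$ nontrivial) is precisely your twist by a surjection $\lambda\colon H \to \langle z\rangle \leq Z(K)$, and in both arguments non-innerness comes from the fact that $H \leq Z(G)$ is fixed pointwise by every inner automorphism while $\alpha$ moves an element of $H$. The only cosmetic difference is that you verify the fixing of $\Phi(G)=\Phi(H)\times\Phi(K)$ in one step, whereas the paper checks $G'$ and $G^p$ separately.
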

	
	\begin{proof}
		Let $M$ be a maximal subgroup of $H$ and $h \in H \setminus M$. Then $H=M<h>$ and therefore $G = M<h> \times K$. Thus every element of $G$ can be written in form $mh^ik$, where $m \in M$, $k \in K$ and $i$ is a positive integer. Let $g \in \Omega_1(Z(K))$. Define a map $\alpha$ on $G$ by $$\alpha(mh^ik)=mh^ikg^i \;\;\text{for some} \;\;i \in \lbrace 0,1,2,...p-1 \rbrace.$$ Note that $\alpha$ is a central automorphism of $G$ as 
		$$(mh^ik)^{-1} \alpha(mh^ik)=g^i \in Z(K)\simeq 1 \times Z(K) \subseteq Z(H) \times Z(K)=Z(G).$$
		
		We claim that $\alpha$ is a non-inner automorphism of $G$.
		On the contrary, let us suppose that $\alpha$ is an inner automorphism. Let $\alpha=f_{m_1h^jk_1}$, where $m_1 \in M$, $k_1 \in K$ and $j$ is a positive integer. Then $f_{m_1h^jk_1}(h)=hg$ which implies that $hg=(m_1h^jk_1)^{-1}h(m_1h^jk_1)=h$ and therefore $g=1$. This contradicts the choice of $g$.\\ 
		Observe that for $i=0$, $\alpha$ fixes   $MK$ elementwise, therefore it fixes $K$ elementwise because $K \subseteq MK$. Note that, $\alpha^p(mh^ik)=mh^ikg^{pi}=mh^ik(g^p)^i=mh^ik$ because order of $g$ is $p$. Therefore order of $\alpha$ is $p$. 
		
		Next we prove that $\alpha$ fixes $G^{\prime}$ elementwise. Let $[x,y] \in G^{\prime}$, where $x, y \in G$. Since $\alpha$ is a central automorphism, let $\alpha(x)=xz_1$ and $\alpha(y)=yz_2$ for some $z_1, z_2 \in Z(G)$. Then by using the Lemma 2.1, we have,    
		$$\alpha([x,y])=[\alpha(x), \alpha(y)]=[xz_1,yz_2]=[x,y].$$
		
		Since $(mh^ik)^{-1} \alpha(mh^ik)$ is a central element of order $p$, $$\alpha((mh^ik)^p)= (mh^ik)^p.$$ Therefore $\alpha$ fixes $G^p$ elementwise. Hence $\alpha$ fixes $\Phi(G)$ elementwise.
	\end{proof}

	\begin{cor}
		Let $G$ be a finite non-abelian $p$-group 
		and let $H_1$, $H_2$, $H_3$,...,$H_n$ be normal subgroups of $G$ such that $G = H_1 \times H_2 \times H_3 \times...\times H_n$, where $H_1$ is abelian and $H_i, i \neq 1$ are non-abelian. Then $G$ has a non-inner central automorphism of order $p$ which fixes $\Phi(G)$ elementwise.
		
	\end{cor}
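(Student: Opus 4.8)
The plan is to reduce the corollary to Theorem~2.2 by collecting all of the non-abelian direct factors into a single normal subgroup. Set $H := H_1$ and $K := H_2 \times H_3 \times \cdots \times H_n$. Since each $H_i$ is normal in $G$ and a product of normal subgroups is again normal, $K$ is a normal subgroup of $G$; likewise $H = H_1$ is normal in $G$. Note that the presence of at least one non-abelian factor forces $n \geq 2$, so $K$ is non-trivial.

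Next I would verify that the decomposition $G = H \times K$ satisfies the hypotheses of Theorem~2.2. From the defining property of the internal direct product we get $G = H_1(H_2 \cdots H_n) = HK$ and $H \cap K = H_1 \cap (H_2 \cdots H_n) = 1$, so indeed $G = H \times K$. The factor $H = H_1$ is non-trivial abelian by assumption. The factor $K$ is non-abelian: $H_2$ is non-abelian and is a direct factor of $K$, so any two elements of $H_2$ that fail to commute in $H_2$ still fail to commute in $K$. Thus all the assumptions of Theorem~2.2 hold for $(G, H, K)$.

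Applying Theorem~2.2 to the two-factor decomposition $G = H \times K$ then produces a non-inner central automorphism of $G$ of order $p$ which fixes $\Phi(G)$ elementwise, which is precisely the assertion of the corollary.

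There is essentially no serious obstacle here; the entire content lies in Theorem~2.2, and the corollary is a bookkeeping step that repackages a multi-factor decomposition as a two-factor one. The only points needing (routine) care are confirming that $K$ inherits normality from the $H_i$ and that it is genuinely non-abelian. An alternative, equally short route is induction on $n$, absorbing one factor at a time, but the single regrouping above is cleaner. I would keep the standing reading that $H_1$ is a \emph{non-trivial} abelian factor, in line with the hypothesis of Theorem~2.2, since the argument genuinely uses $H_1 \neq 1$.
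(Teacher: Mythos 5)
Your proposal is correct and is essentially the paper's own argument: the paper likewise reduces to Theorem~2.2 by grouping the non-abelian factors into a single normal subgroup $K=H_2\times\cdots\times H_n$ (presented there as an iteration over $n$ rather than your one-shot regrouping). Your explicit remark that $H_1$ must be read as non-trivial for the theorem to apply is a fair point the paper leaves implicit.
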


	\begin{proof}
		For $n=2$, the result follows from Theorem 2.3 by taking $H_1=H$ and $H_2=K$. For $n=3$, Let $K=H_2 \times H_3$. Then $K = H_2H_3$ and $H_2 \cap H_3 = 1$. Since $H_2$, $H_3$ be normal subgroups of $G$, $H_2H_3$ is also a normal subgroup of $G$. Therefore $G=H_1 \times K$. Thus by Theorem 2.3, $G$ has a non-inner central automorphism of order $p$ which fixes $\Phi(G)$ elementwise. By continuing in this manner, the result follows.
		
	\end{proof}
	
	A finite non-abelian group $G$ is called purely non-abelian if it has no non-trivial abelian direct factor. 
	The following corollary is an immediate consequence of the above theorem.
	\begin{cor}
		Let $G$ be a finite non-abelian $p$-group 
		such that $G$ is not purely non-abelian. Then $G$ has a non-inner central automorphism of order $p$ which fixes $\Phi(G)$ elementwise. 
		
	\end{cor}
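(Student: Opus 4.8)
The plan is to reduce the statement directly to Theorem 2.3. By definition, a finite non-abelian $p$-group $G$ that is not purely non-abelian possesses a non-trivial abelian direct factor; that is, there exist subgroups $H$ and $K$ of $G$, with $H$ non-trivial abelian, such that $G = H \times K$. The first thing I would record is that in an (internal) direct product $G = H \times K$ both $H$ and $K$ are automatically normal subgroups of $G$, so the structural hypotheses of Theorem 2.3 are almost in place; the only remaining point to check is that $K$ is non-abelian.

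For that step, suppose for contradiction that $K$ is abelian. Then $G = H \times K$ is a direct product of two abelian groups, hence abelian, contradicting the hypothesis that $G$ is non-abelian. Therefore $K$ must be non-abelian.

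Having established that $G = H \times K$ with $H$ non-trivial abelian and $K$ non-abelian, I would now invoke Theorem 2.3 verbatim: it produces a non-inner central automorphism of $G$ of order $p$ which fixes $\Phi(G)$ elementwise, which is precisely the assertion of the corollary. (Alternatively, one could collect all the non-abelian factors of a decomposition of $G$ into a single subgroup and cite Corollary 2.4 instead, but this is not needed.)

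I do not expect any genuine obstacle here: the corollary is essentially a restatement of the definition of ``not purely non-abelian'' in the language of Theorem 2.3, and the sole (routine) point requiring justification is the impossibility of the complementary factor $K$ being abelian, which is immediate from the non-abelianness of $G$.
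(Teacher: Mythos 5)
Your proposal is correct and matches the paper, which simply records this corollary as an immediate consequence of Theorem 2.3; your added observation that the complementary factor $K$ cannot be abelian (else $G$ would be abelian) is exactly the one routine detail needed to make the reduction explicit.
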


	We conclude this paper with an example of a group that satisfies the hypothesis of  Theorem 2.2. Consider
	$$G=(C_3 \times C_9) \times ((C_9 \times C_3)\rtimes C_3)=\langle x, y, z, a, b, c, d \rangle,$$
	
	with the relations:
	$a^3=b^3=c^3=d^3=1, [x,z]=[y,z]=[y,a]=[z,a]=[x,a]=[z,b]=
	[a,b]=[y,b]=[x,d]=[b,d^{-1}]=1, y^2dy=x^2c^{-1}x=z^2c^{-1}z=1, xbyx^{-1}y^{-1}=dxbx^{-1}b^{-1}=1$.
	This group is the group number 8028 in the GAP Library of groups of order $3^7$. Note that $Z(G)=\langle a, c, d, z \rangle$. 
	There are 76527504 automorphisms of this group. One of these automorphisms is $\alpha$, where 
	\begin{eqnarray*}
		&
		\alpha(x^2c^2)=x^2, \; \alpha(xb^2c^2d)=xb^2cd,\;  \alpha(xyza^2b^2c^2d)=xyzab^2d,  
		&
		\\
		&
		\alpha(xyzabc^2)=xyzbc, \;\alpha(x^2y^2a^2b^2cd)=x^2y^2a^2b^2d.
	\end{eqnarray*}

	It is not very difficult to see that by using the relators of the group $G$, the map $\alpha$ can be reduced to the following form:
	$$
	\alpha (x)=xc^2, ~ \alpha (z)=za^2c, ~ \alpha (a)=ac^2 ~ \mbox{and} ~ \alpha(g)=g,  \mbox{for all} ~ g \in \lbrace y, b, c, d \rbrace.
	$$

	Since $\alpha$ maps $a$ to $ac^2$, $\alpha$ is a non-inner automorphism of $G$.
	Also $\alpha^3(a)=ac^6=a$ and $\alpha^3(g)=g$ $ \forall \; g \in \lbrace x, y, z, b, c, d \rbrace$. Therefore order of $\alpha$ is 3. It is easy to check that $\alpha$ is a central automorphism of $G$. As proved in Theorem 2.2, by using the similar arguments, one can show that $\alpha$ fixes  $G^{\prime}$ elementwise. Note that $$G^3=\langle x^3, y^3, z^3, a^3, b^3, c^3, d^3 \rangle = \langle x^3, y^3, z^3 \rangle.$$
	Observe that since $c \in Z(G)$, $\alpha(x^3)= x^3c^6=x^3$. Therefore $\alpha$ fixes $x^3$. Also since $\alpha$ fixes $y$, it fixes $y^3$. Now $\alpha(z^3)=z^3a^6c^3=z^3$. Thus $\alpha$ fixes $G^3$. Hence $\alpha$ fixes $\Phi(G)=G^{\prime}G^3$ elementwise.

\end{document}